\newtheorem{thm}{Theorem}[section]
\newtheorem{cor}[thm]{Corollary}
\newtheorem{conj}[thm]{Conjecture}
\newtheorem{prop}[thm]{Proposition}
\theoremstyle{definition}
\theoremstyle{remark}
\newtheorem{rem}[thm]{Remark}
\newcommand{\R}{\mathbb R}
\newcommand{\N}{\mathbb N}
\newcommand{\lam}{\lambda}
\author[E. Parini]{Enea Parini}
\author[J. D. Rossi]{Julio D. Rossi}
\author[A. Salort]{Ariel Salort}
\address[E. Parini]{Aix Marseille Univ, CNRS, Centrale Marseille, I2M, 39 Rue Fr\'{e}d\'{e}ric Joliot Curie, 13453 Marseille CEDEX 13, France}
\email{enea.parini@univ-amu.fr}
\address[J. D. Rossi and A. Salort]{IMAS-Conicet and 
Departamento  de Matem{\'a}tica, FCEyN, Universidad de Buenos Aires,
Ciudad Universitaria. Pab 1, (1428)
Buenos Aires,
Argentina.} \email{jrossi@dm.uba.ar, asalort@dm.uba.ar}
\title[Reverse Faber-Krahn inequality]{Reverse Faber-Krahn inequality for a truncated laplacian operator}
\subjclass[2010]{Primary: 35J60, 35P30. Secondary: 35J70, 35J75, 35P15, 49Q10.}
\keywords{Truncated Laplacian, reverse Faber-Krahn inequality, spectral optimization.}
\begin{document}

\begin{abstract} In this paper we prove a reverse Faber-Krahn inequality for the principal eigenvalue $\mu_1(\Omega)$ of the fully nonlinear eigenvalue problem \[ \label{eq}
\left\{\begin{array}{r c l l} 
-\lambda_N(D^2 u) & = & \mu u & \text{in }\Omega, \\ u & = & 0 & \text{on }\partial \Omega. 
\end{array}\right.
\]
Here $ \lambda_N(D^2 u)$ stands for the largest eigenvalue of the Hessian matrix of $u$. More precisely, we prove that, for an open, bounded, convex domain $\Omega \subset \R^N$, the inequality
\[ \mu_1(\Omega) \leq \frac{\pi^2}{[\text{diam}(\Omega)]^2} = \mu_1(B_{\text{diam}(\Omega)/2}),\]
where $\text{diam}(\Omega)$ is the diameter of $\Omega$, holds true. 
The inequality actually implies a stronger result, namely, the maximality of the ball under a diameter constraint. 

Furthermore, we discuss the minimization of $\mu_1(\Omega)$ under different kinds of constraints.
\end{abstract}

\maketitle

\section{Introduction}
Given a bounded domain $\Omega\subset \R^N$, we consider the fully nonlinear eigenvalue problem 
\begin{equation} \label{eq.intro}
\left\{\begin{array}{ l l} 
-\lambda_N(D^2 u)  =  \mu u & \text{in }\Omega, \\ u  =  0 & \text{on }\partial \Omega, 
\end{array}\right.
\end{equation}
where $\mu \in \R$ is the eigenvalue, $D^2 u$ is the Hessian matrix of a function $u: \Omega \to \R$, and $\lambda_1(D^2 u),\dots,\lambda_N(D^2 u)$ are the ordered eigenvalues of $D^2 u$, so that
\[ 
\lambda_1(D^2 u) \leq \dots \leq \lambda_N(D^2 u). 
\]
 
Following the celebrated approach by Berestycki, Varadhan, and Nirenberg \cite{BNV}, it is possible to define the first eigenvalue $\mu_1(\Omega)$ as
\begin{equation} \label{definitioneigenvalue}
\mu_1(\Omega) := \sup \{ \mu \in \R \colon \exists \varphi \in LSC(\Omega),\, \varphi > 0 \text{ in }\Omega \text{ s.t. } -\lambda_N(D^2 \varphi) \geq \mu \varphi\},
\end{equation}
where $LSC(\Omega)$ is the set of lower semicontinuous functions in $\Omega$, and the inequality $ -\lambda_N(D^2 \varphi) \geq \mu \varphi$ is understood in the viscosity sense.
When the domain $\Omega$ is assumed to be strictly convex, the existence of a strictly positive eigenfunction associated to $\mu_1(\Omega)$ 
has been proven in \cite{BGI}. It is still unknown whether the assumptions on the domain can be weakened to (not strict) convexity; however, it should be observed that, if $u$ is a positive eigenfunction, then it holds, in the viscosity sense,
 \[ 
\lambda_1(D^2u) \leq \dots \leq \lambda_N(D^2 u) \leq 0,
\]
which implies the negative semidefiniteness of $D^2 u$, and hence the concavity of $u$. The fact that $\Omega = \{u > 0\}$ implies the convexity of $\Omega$. For this reason, unless otherwise stated, we will always suppose that $\Omega$ is a convex domain.

The operator $\mathcal{P}_N^+ (D^2 u):= \lam_N(D^2 u)$ belongs to the class of so-called \emph{truncated Laplacian} operators, 
which are defined as the sum of a finite number of consecutive eigenvalues of $D^2 u$. More precisely, this class of operators is defined, 
for $k=1,\dots,N$, as
\[ \mathcal{P}_k^+ (D^2 u) := \sum_{i=N-k+1}^{N} \lambda_i(D^2 u),\qquad\mathcal{P}_k^- (D^2u) := \sum_{i=1}^{k} \lambda_i(D^2 u)\]
and received some attention recently, see \cite{BGI,BGI2,BR,BR2,BER,Caffa,HL,HL2}.
These highly degenerate elliptic operators appeared first in the context of differential geometry, see \cite{AS,Sha,Wu}

A natural question arising in spectral theory is the following: which domain minimizes or maximizes the first eigenvalue of a differential operator, under some geometrical constraint on the domain? A classical example is given by the Laplacian operator
\[
\Delta u = \text{tr}\, (D^2 u)= \sum_{i=1}^N \lam_i(D^2 u).
\]
In this case, the well-known \emph{Rayleigh-Faber-Krahn inequality} states that the first eigenvalue $\mu_1^\Delta$ of $-\Delta$ under Dirichlet boundary conditions is uniquely minimized, among domains with fixed volume, by the ball. More precisely, if $\Omega \subset \R^N$, and $B_r$ is a ball of radius $r$ such that $|\Omega|=|B_r|$, then
$$
\mu_1^\Delta(B_r) \leq \mu_1^\Delta(\Omega),
$$
and equality holds if and only if $\Omega=B_r$.

However, if we consider the \emph{Monge-Amp\`{e}re operator}, which is the fully nonlinear operator defined as
\[\text{det}(D^2 u) = \prod_{i=1}^N \lambda_i(D^2 u),\]
the situation is completely different. Indeed, among all bounded, convex domains with fixed volume, the (unique) eigenvalue is \emph{maximized} by the ball \cite[Theorem 1.4]{le}, while it is conjectured that the $N$-dimensional regular simplex is a minimizer. 

Concerning the truncated Laplacian operator $\mathcal{P}_N^+$, Birindelli, Galise and Ishii conjectured in \cite{BGI2} the validity of a reverse Faber-Krahn inequality, namely, that the ball maximizes $\mu_1(\Omega)$ among convex domains with fixed volume. The conjecture was supported by some partial results proven in \cite{BGI2}: 
\begin{itemize}
\item The hypercube has the largest first eigenvalue among hyperrectangles of given measure; 
\item The ball has a larger first eigenvalue than the hypercube with the same volume.
\end{itemize}

In this paper we prove the validity of the conjectured reverse Faber-Krahn inequality. 
More precisely, we first prove 
the following theorem. 

\begin{thm} \label{teo.intro.max}
For an open, bounded, convex domain $\Omega \subset \R^N$, the inequality
\[ \mu_1(\Omega) \leq \frac{\pi^2}{[\text{diam}(\Omega)]^2},\]
where $\text{diam}(\Omega)$ is the diameter of $\Omega$, holds true.
\end{thm}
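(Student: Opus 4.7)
The strategy is to restrict a principal eigenfunction of $\Omega$ to a chord realizing the diameter and to run a one-dimensional Sturm-type comparison against $\sin(\pi t/d)$. Assume first that $\Omega$ is strictly convex, so that by \cite{BGI} there exists a positive eigenfunction $u\in C(\overline{\Omega})$, vanishing on $\partial\Omega$, satisfying $-\lambda_N(D^2 u)=\mu_1(\Omega)\,u$ in the viscosity sense; as recalled in the introduction, $u$ is concave on $\Omega$.

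The first, and most delicate, step is to pass from the viscosity inequality to a classical pointwise inequality on a smooth approximation. Because $u$ is concave, Alexandrov's theorem produces a pointwise Hessian a.e., and testing the viscosity supersolution property against the Alexandrov quadratic perturbed by $-\varepsilon|\cdot|^2$ and sending $\varepsilon\to 0$ yields
\[
v^{\top} D^2 u(x)\,v\le \lambda_N(D^2 u(x))\le -\mu_1(\Omega)\,u(x)
\]
for every unit vector $v$ and almost every $x\in\Omega$. Since $u$ is concave, its distributional Hessian is a non-positive matrix-valued measure, and convolving against a standard mollifier $\rho_{\varepsilon}$ gives a smooth concave function $u_\varepsilon:=u\ast \rho_\varepsilon$ on $\Omega_\varepsilon:=\{x:\text{dist}(x,\partial\Omega)>\varepsilon\}$ that inherits the inequality classically: $v^{\top} D^2 u_\varepsilon\,v\le -\mu_1(\Omega)\,u_\varepsilon$ pointwise on $\Omega_\varepsilon$ for every unit $v$, i.e.\ $-\lambda_N(D^2 u_\varepsilon)\ge \mu_1(\Omega)\,u_\varepsilon$ in the classical sense.

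Now take $x_0, x_1\in \partial\Omega$ with $|x_0-x_1|=d:=\text{diam}(\Omega)$ and set $e:=(x_1-x_0)/d$. For small $\delta>0$ and then $\varepsilon>0$ smaller, the segment $\gamma(t):=x_0+(\delta+t)e$, $t\in[0,L]$ with $L:=d-2\delta$, lies in $\Omega_\varepsilon$, and $g(t):=u_\varepsilon(\gamma(t))$ is smooth, strictly positive, and satisfies $g''(t)=e^{\top}D^2u_\varepsilon(\gamma(t))\,e\le -\mu_1(\Omega)\,g(t)$. Multiplying by $\phi(t):=\sin(\pi t/L)$ and integrating by parts twice over $[0,L]$, the vanishing of $\phi$ at the endpoints combined with $g(0),g(L)>0$ produces a strictly positive boundary term and yields
\[
\frac{\pi}{L}\bigl(g(0)+g(L)\bigr)\le \left(\frac{\pi^2}{L^2}-\mu_1(\Omega)\right)\int_0^L \phi\,g\,dt,
\]
which forces $\mu_1(\Omega)<\pi^2/(d-2\delta)^2$. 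Letting $\delta\downarrow 0$ proves the theorem for strictly convex $\Omega$.

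The general case follows by approximating $\Omega$ from inside by strictly convex subdomains $\Omega_n\subset\Omega$ with $\text{diam}(\Omega_n)\to \text{diam}(\Omega)$ and invoking the monotonicity $\mu_1(\Omega)\le \mu_1(\Omega_n)$, which is immediate from \eqref{definitioneigenvalue}. The principal obstacle is really the first step: concavity of $u$ is essential in order for the viscosity inequality to survive mollification, since it is precisely this that makes the distributional Hessian a non-positive matrix-valued measure that commutes with convolution. Once a classical pointwise inequality on $u_\varepsilon$ is available, what remains is a routine one-dimensional Sturm comparison.
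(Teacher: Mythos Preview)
Your argument is correct, but it takes a quite different and considerably more laborious route than the paper's. The paper never touches the eigenfunction: it simply observes that a convex $\Omega$ contains, for every pair of points $x_n,y_n\in\Omega$ with $|x_n-y_n|=d_n\to\text{diam}(\Omega)$, a thin hyperrectangle $R$ with one side of length $d_n$ and the remaining $N-1$ sides of length $\varepsilon_n\to 0$. Monotonicity of $\mu_1$ with respect to inclusion, together with the explicit formula $\mu_1(R)=\pi^2/(d_n^2+(N-1)\varepsilon_n^2)$, then gives the claim in one line by letting $n\to\infty$. In particular, the paper's proof never needs the existence of an eigenfunction, strict convexity, Alexandrov's theorem, or any mollification.

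Your approach, by contrast, restricts a principal eigenfunction to a diametral chord and runs a one-dimensional Sturm comparison against $\sin(\pi t/L)$. The technical heart of your argument---passing from the viscosity inequality to a classical pointwise inequality on a mollified $u_\varepsilon$ by exploiting concavity and the sign of the singular part of the distributional Hessian---is sound, and the integration-by-parts computation is correct. What your method buys is a more PDE-intrinsic proof that does not rely on the explicit computation of $\mu_1$ for hyperrectangles, and it yields a \emph{strict} inequality $\mu_1(\Omega)<\pi^2/(d-2\delta)^2$ at every stage before the limit. It is also close in spirit to the technique the paper uses later (Proposition~\ref{reuleaux}) to show that the Reuleaux triangle has $\mu_1<\pi^2$, where the eigenfunction is restricted to segments---though there $C^2$ regularity is assumed, whereas you handle the regularity issue via mollification. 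The price you pay is a much longer proof and the need to pass through strict convexity and an approximation argument at the end.
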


Theorem \ref{teo.intro.max} readily implies that the ball maximizes $\mu_1(\Omega)$ under a diameter constraint, and, as a consequence, we obtain the reverse Faber-Krahn inequality.

\begin{thm}[\bf{Reverse Faber-Krahn inequality}] \label{teo.intro.faber}
Let $\Omega \subset \R^N$ be an open, bounded, convex set. Let $B_r$ be a ball such that $|\Omega|=|B_r|$. Then,
\[ \mu_1(\Omega) \leq \mu_1(B_r),\]
and equality holds if and only if $\Omega = B_r$.
\end{thm}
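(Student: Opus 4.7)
The plan is to combine Theorem \ref{teo.intro.max} with the classical isodiametric inequality. Writing $d = \mathrm{diam}(\Omega)$, Theorem \ref{teo.intro.max} gives the universal upper bound
\[
\mu_1(\Omega) \leq \frac{\pi^2}{d^2},
\]
and the abstract already records that this bound is attained by the ball of radius $d/2$, so $\mu_1(B_\rho) = \pi^2/(2\rho)^2$ for every ball. Thus to deduce the reverse Faber--Krahn inequality it suffices to compare $d$ with $2r$ under the volume constraint $|\Omega| = |B_r|$.

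This is exactly what the isodiametric inequality provides: among all bounded sets in $\R^N$ with a given diameter, the ball of that diameter has the largest volume. Applying it to $\Omega$, one gets
\[
|\Omega| \leq |B_{d/2}|,
\]
and the assumption $|\Omega| = |B_r|$ then forces $r \leq d/2$, i.e. $\mathrm{diam}(\Omega) \geq \mathrm{diam}(B_r)$. Chaining the estimates yields
\[
\mu_1(\Omega) \leq \frac{\pi^2}{d^2} \leq \frac{\pi^2}{(2r)^2} = \mu_1(B_r),
\]
which is the desired inequality.

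For the rigidity statement, suppose $\mu_1(\Omega) = \mu_1(B_r)$. Then both inequalities in the chain must be equalities; in particular the isodiametric inequality is saturated, and the equality case of the isodiametric inequality (which, since we work with a convex, hence measurable, set equal to its convex hull, holds in the sharp form) characterizes $\Omega$ as a ball of radius $r$, up to translation. This closes the argument.

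The only substantive point is the equality discussion. The chain of inequalities is tight only when both links are tight, but the sharp form of the isodiametric inequality already delivers the ball uniquely; no use of the (more delicate) equality case of Theorem \ref{teo.intro.max} is needed, which is fortunate since characterizing equality directly for $\mu_1$ would be the main obstacle if one tried to argue without the geometric detour through the diameter.
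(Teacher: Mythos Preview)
Your argument is correct. You combine Theorem~\ref{teo.intro.max} directly with the isodiametric inequality $|\Omega|\le |B_{d/2}|$ to get $r\le d/2$, and then read off $\mu_1(\Omega)\le\pi^2/d^2\le\pi^2/(2r)^2=\mu_1(B_r)$; the equality case of the isodiametric inequality (sharp for convex bodies) gives the rigidity.

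The paper takes a slightly longer route: it first passes through a \emph{perimeter} constraint, using that among convex sets of given diameter the ball maximizes perimeter (Proposition~\ref{maxperimeter}, via \cite{maggiponsiglionepratelli}), and only then invokes the isoperimetric inequality to pass from perimeter to volume (Proposition~\ref{maxvolume}). Your shortcut collapses these two steps into one application of the isodiametric inequality for volume, which is cleaner and yields the uniqueness statement in every dimension in a single stroke. The paper's detour has the minor advantage of also recording the intermediate result under a perimeter constraint, but for the statement at hand your approach is the more economical one.
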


Although the techniques we employed to prove these theorems are of rather elementary nature, we believe that the results are among the first examples of isoperimetric inequalities for fully nonlinear operators, apart from the case of the well-known Monge-Amp\`{e}re operator.

Finally, let us briefly discuss the \emph{minimization} problem for $\mu_1(\Omega)$. We will see in Section \ref{preliminary} that the minimization problem for $\mu_1(\Omega)$ under a volume constraint does not admit a solution, since there exists a sequence of hyperrectangles $\{R_n\}_{n \in \N}$ 
that degenerate to a line keeping fixed volume such that $\mu_1(R_n) \to 0$ as $n \to +\infty$. Nevertheless, it is possible to consider the same minimization problem under a perimeter or a diameter constraint. In Section \ref{minimizationsection} we provide some preliminary results and we state some conjectures.

\subsection*{Acknowledgments} 
This article was started during a visit of the third author to Aix Marseille University in the context of the MSCA Project GHAIA (ref:777822).

\section{Preliminary results} \label{preliminary}

\subsection*{Notations}

For a measurable set $\Omega \subset \R^N$, we will denote by $|\Omega|$ its volume, which corresponds to its $N$-dimensional Lebesgue measure. $P(\Omega)$ will stand for the perimeter of $\Omega$; since we will be mainly dealing with convex sets, we can define $P(\Omega)$ as
\[ P(\Omega) := \mathcal{H}^{N-1}(\partial \Omega),\]
where $\mathcal{H}^{N-1}$ is the $(N-1)$-dimensional Hausdorff measure. The diameter of $\Omega$ will be denoted by $\text{diam}(\Omega)$,
\[ \text{diam}(\Omega):= \sup \{ |x-y|\,|\,x,y \in \Omega\}.\]
For $r>0$, the symbol $B_r$ will stand for an open ball of radius $r$.

\subsection*{The eigenvalue problem}

Let $\Omega \subset \R^N$ be an open, bounded domain. It is clear from the definition of $\mu_1(\Omega)$ in \eqref{definitioneigenvalue} that the function
\[ \Omega \mapsto \mu_1(\Omega) \]
is monotone decreasing with respect to set inclusion, that is,
\[ \Omega_1 \subset \Omega_2 \Rightarrow \mu_1(\Omega_1) \geq \mu_1(\Omega_2).\]
Moreover, $\mu_1(\Omega)$ enjoys the following scaling property: for a given open, bounded set $\Omega \subset \R^N$, and $t>0$, define
\[ t\Omega := \{ x \in \R^N \,|\,x/t \in \Omega \}.\]
Then,
\[ \mu_1(t\Omega) = \frac{1}{t^2}\mu_1(\Omega).\]
When $\Omega \subset \R^N$ is a strictly convex domain, it is possible to show the existence of a strictly positive viscosity solution $u \in C(\overline{\Omega})$ to the eigenvalue problem
\begin{equation} \label{eq2}
\left\{\begin{array}{ l l} 
-\lambda_N(D^2 u)  =  \mu u & \text{in }\Omega, \\ u  =  0 & \text{on }\partial \Omega, 
\end{array}\right.
\end{equation}
for $\mu = \mu_1(\Omega)$ (see \cite{BGI2}). 
It is still an open problem to determine whether also (not strictly) convex domains admit a positive first eigenfunction. 
However, convexity turns out to be a necessary condition. If $u \in C(\overline{\Omega})$ is a positive eigenfunction, then it satisfies, in viscosity sense,
$ \lambda_1(D^2u) \leq \dots \leq \lambda_N(D^2 u) \leq 0$ and hence $u$ is concave. 
Hence, the fact that $\Omega = \{u > 0\}$ implies the convexity of $\Omega$. 

For some particular domains, it is possible to compute the eigenvalue $\mu_1(\Omega)$ and its associated eigenfunction (see \cite{BGI2}):
\begin{itemize}
\item 
When $\Omega=B_r\subset \R^N$ is the ball of radius $r$, it holds
\begin{equation} \label{eig.ball}
\mu_1(B_r) = \frac{\pi^2}{4r^2}
\end{equation}
and the associated eigenfunction is given by
\begin{equation}
u(x)=\cos\left(\frac{\pi}{2r}|x|\right).
\end{equation}
It is worth noticing that the eigenvalue and the eigenfunction do not depend on the space dimension $N$.
\item In the hyperrectangle $R = \prod_{i=1}^N \left(-\alpha_i,\alpha_i\right)\subset \R^N$ the first eigenvalue is given by
\begin{equation} \label{eig.cube}
\mu_1(R) = \frac{\pi^2}{4(\alpha_1^2 + \dots + \alpha_N^2)}
\end{equation}
and the associated eigenfunction has the form
\[ u(x) = \prod_{i=1}^N \left[\cos{\left( \frac{\pi}{2\alpha_i}x_i\right)}\right]^{\frac{1}{p_i + 1}}\]
for some adequate values of $p_i > -1$.
\end{itemize}

These examples have some significant consequences. First of all, it follows by monotonicity that the quantity $\mu_1(\Omega)$ is well defined and finite for every open, bounded set, since there exists a sufficiently small ball $B_r \subset \Omega$, so that
\[ \mu_1(\Omega) \leq \mu_1(B_r) < + \infty.\]

Moreover, the case of the hyperrectangle shows that the problem of minimizing $\mu_1(\Omega)$ under a volume constraint 
does not have a solution: Fix $c>0$, and take $\{R_n\}_{n \in \N}$ is a sequence of hyperrectangles defined as
\[ R_n = (0,n) \times \underbrace{ (0,1) \times \dots \times (0,1) }_{(N-2) \text{ times} }\times  \left(0,\frac{c}{n}\right).\]
Then, it is straightforward to verify that $|R_n|=c$ for every $n$ and 
\[ \lim_{n \to +\infty} \mu_1(R_n) = 0.\]

\section{Maximization of the first eigenvalue under constraint}

In this section we will deal with the problem of maximizing the first eigenvalue $\mu_1(\Omega)$ among convex sets, under various 
different constraints.

The key result is the following estimate for $\mu_1(\Omega)$, which is reminiscent of a similar result (with reversed inequality) obtained by Payne and Weinberger for the first nontrivial eigenvalue of the Laplacian under Neumann boundary conditions, see \cite{payneweinberger}.

\begin{prop} \label{isodiametric}
Let $\Omega \subset \R^N$ be an open, bounded, convex domain. Then,
\begin{equation} \label{eq:isodiametric} \mu_1(\Omega) \leq \frac{\pi^2}{[\text{diam}(\Omega)]^2},\end{equation}
where $\text{diam} (\Omega)$ is the diameter of $\Omega$.
\end{prop}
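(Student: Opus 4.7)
The plan is to exploit two tools already established in Section \ref{preliminary}: the explicit value of $\mu_1$ on hyperrectangles from \eqref{eig.cube}, and the monotonicity of $\mu_1$ under set inclusion. The geometric idea is that a convex set of diameter $d$ must contain an arbitrarily thin hyperrectangle whose longest side is of length as close to $d$ as desired and aligned with a near-diameter chord; applying the formula to such a rectangle and passing to the limit gives the claimed estimate.

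Concretely, I would fix $\varepsilon > 0$ and select $x_1, y_1 \in \Omega$ with $L := |y_1 - x_1| > \text{diam}(\Omega) - \varepsilon$, which is possible because the diameter is the supremum of pairwise distances. By convexity of $\Omega$, the closed segment $[x_1, y_1]$ lies in $\Omega$, and by compactness of this segment together with openness of $\Omega$, there exists $r > 0$ such that the open $r$-tubular neighborhood of $[x_1, y_1]$ is still contained in $\Omega$. Inside this tube I would inscribe a hyperrectangle $R_\eta$ with one side of length $L$ aligned with $e := (y_1 - x_1)/L$ and with the remaining $N - 1$ sides of length $\eta < 2r/\sqrt{N - 1}$.

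Since the operator $-\lambda_N(D^2 \cdot)$ is invariant under orthogonal changes of variable (conjugating a symmetric matrix by an orthogonal matrix preserves its eigenvalues), so is the functional $\mu_1$, and the formula \eqref{eig.cube} applies directly to $R_\eta$ in any orientation, yielding
\[
\mu_1(R_\eta) = \frac{\pi^2}{L^2 + (N - 1)\eta^2}.
\]
Combining with the monotonicity $\mu_1(\Omega) \leq \mu_1(R_\eta)$ and sending first $\eta \to 0^+$ and then $\varepsilon \to 0^+$ produces the claimed bound $\mu_1(\Omega) \leq \pi^2/[\text{diam}(\Omega)]^2$.

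The main concern with this approach is purely geometric: one must verify the existence of the tubular neighborhood for some $r > 0$, and then check that the inscribed hyperrectangle really sits inside it for $\eta$ small enough (any point of $R_\eta$ lies at distance at most $\eta \sqrt{N - 1}/2$ from the central segment, so the choice $\eta < 2r/\sqrt{N - 1}$ suffices). All the deeper content is already encapsulated in the explicit formula \eqref{eig.cube} and in the inclusion-monotonicity of $\mu_1$, so no further viscosity gymnastics are needed beyond what is invoked in the preliminaries.
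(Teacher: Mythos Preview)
Your proposal is correct and follows essentially the same approach as the paper: both inscribe a thin hyperrectangle aligned with a near-diameter chord, apply the explicit formula \eqref{eig.cube} together with the inclusion-monotonicity of $\mu_1$, and pass to the limit. Your write-up is in fact more careful than the paper's, since you make explicit the tubular-neighborhood argument and the orthogonal invariance of $\mu_1$ that the paper leaves implicit.
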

\begin{proof}
Let $\{x_n\}_{n\in\mathbb{N}}$ and $\{y_n\}_{n\in\mathbb{N}}$ be two sequences of points in $\Omega$ such that
\[ d_n:= \text{dist}(x_n,y_n) \to \text{diam} (\Omega) \qquad \text{as }n \to +\infty.\]
By convexity, $\Omega$ contains, for $\varepsilon_n \in \left(0, \frac{1}{n}\right)$ sufficiently small, a hyperrectangle $R$ of sides 
of length $\varepsilon_n$ ($N-1$ times) and $d_n$ (one time). By monotonicity of $\mu_1$, we obtain
 \[ \mu_1(\Omega) \leq \mu_1(R) = \frac{\pi^2}{(N-1)\varepsilon_n^2 + d_n^2.}\]
Passing to the limit $n \to +\infty$, we obtain
\[ \mu_1(\Omega) \leq \frac{\pi^2}{[\text{diam} (\Omega)]^2}\]
as we wanted to show.
\end{proof}

As an immediate corollary we have the following.

\begin{cor} \label{corol}
Let $\Omega \subset \R^N$ be a convex set which is contained in a ball of radius $\frac{\text{diam} (\Omega)}{2}$. Then,
\begin{equation} \label{alwaysequal} \mu_1(\Omega) = \frac{\pi^2}{[\text{diam} (\Omega)]^2}.\end{equation}
\end{cor}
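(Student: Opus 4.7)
The upper bound is already in hand, so the entire task is to produce a matching lower bound and sandwich the two sides.

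The plan is as follows. By Proposition \ref{isodiametric} one already has the ``$\leq$'' direction in \eqref{alwaysequal}, so it remains only to show
\[
\mu_1(\Omega) \;\geq\; \frac{\pi^2}{[\text{diam}(\Omega)]^2}.
\]
For this I would invoke the two basic facts about $\mu_1$ recalled in Section \ref{preliminary}: monotonicity with respect to set inclusion, and the explicit value $\mu_1(B_r) = \pi^2/(4r^2)$ from \eqref{eig.ball}.

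Concretely, set $r := \text{diam}(\Omega)/2$ and let $B_r$ be the ball provided by the hypothesis, so that $\Omega \subset B_r$. Monotonicity then yields $\mu_1(\Omega) \geq \mu_1(B_r)$, and \eqref{eig.ball} rewrites the right-hand side as
\[
\mu_1(B_r) \;=\; \frac{\pi^2}{4r^2} \;=\; \frac{\pi^2}{[\text{diam}(\Omega)]^2}.
\]
Combining this with Proposition \ref{isodiametric} gives equality, which is the claim.

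There is no real obstacle here: the only subtlety is that the ``enclosing ball of radius $\text{diam}(\Omega)/2$'' assumption is sharp, since in general the smallest enclosing ball of a convex set may have radius strictly larger than $\text{diam}(\Omega)/2$ (for instance, an equilateral triangle), so one must take care to use the hypothesis exactly as stated rather than appealing to Jung's theorem or similar bounds. Apart from that, the argument is a one-line sandwich between Proposition \ref{isodiametric} and the monotonicity-plus-\eqref{eig.ball} lower bound.
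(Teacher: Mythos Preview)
Your proof is correct and follows exactly the same approach as the paper's own proof: the upper bound from Proposition~\ref{isodiametric}, the lower bound from monotonicity combined with the explicit value \eqref{eig.ball} for the ball, and the sandwich between them. The paper simply states this in one sentence, but the content is identical.
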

\begin{proof}
The result follows by the monotonicity of $\mu_1(\Omega)$ with respect to set inclusion, Proposition \ref{isodiametric} and the explicit expression 
of $\mu_1(\Omega)$ for the ball.
\end{proof}

Examples of convex sets satisfying the conditions of Corollary \ref{corol} include regular polygons in $\R^2$ with an even number of sides, or hypercubes in $\R^N$. 
However, not every convex set enjoys this property: for instance, an equilateral triangle can not be contained in a disk of radius smaller than $\frac{\text{diam} (\Omega)}{\sqrt{3}}$. Nevertheless, by Jung's Theorem \cite{jung} we obtain the following lower bound:
\begin{equation} \label{lowerboundjung} \mu_1(\Omega) \geq \frac{N+1}{2N} \cdot \frac{\pi^2}{[\text{diam} (\Omega)]^2},
\end{equation}
for any open, bounded, convex set $\Omega \subset \R^N$.

In view of these considerations, one might wonder whether there actually exist domains such that   inequality \eqref{eq:isodiametric} is strict. In the next proposition we show, under a smoothness assumption on the eigenfunction, that this is indeed the case for the \emph{Reuleaux triangle}. We observe that, if $\mathcal{R} \subset \R^2$ is the Reuleaux triangle generated by the open equilateral triangle $T \subset \mathcal{R}$ of unit side length, then it holds $\text{diam}(\mathcal{R})=1$.

\begin{figure}
\centering
\includegraphics[width=6cm]{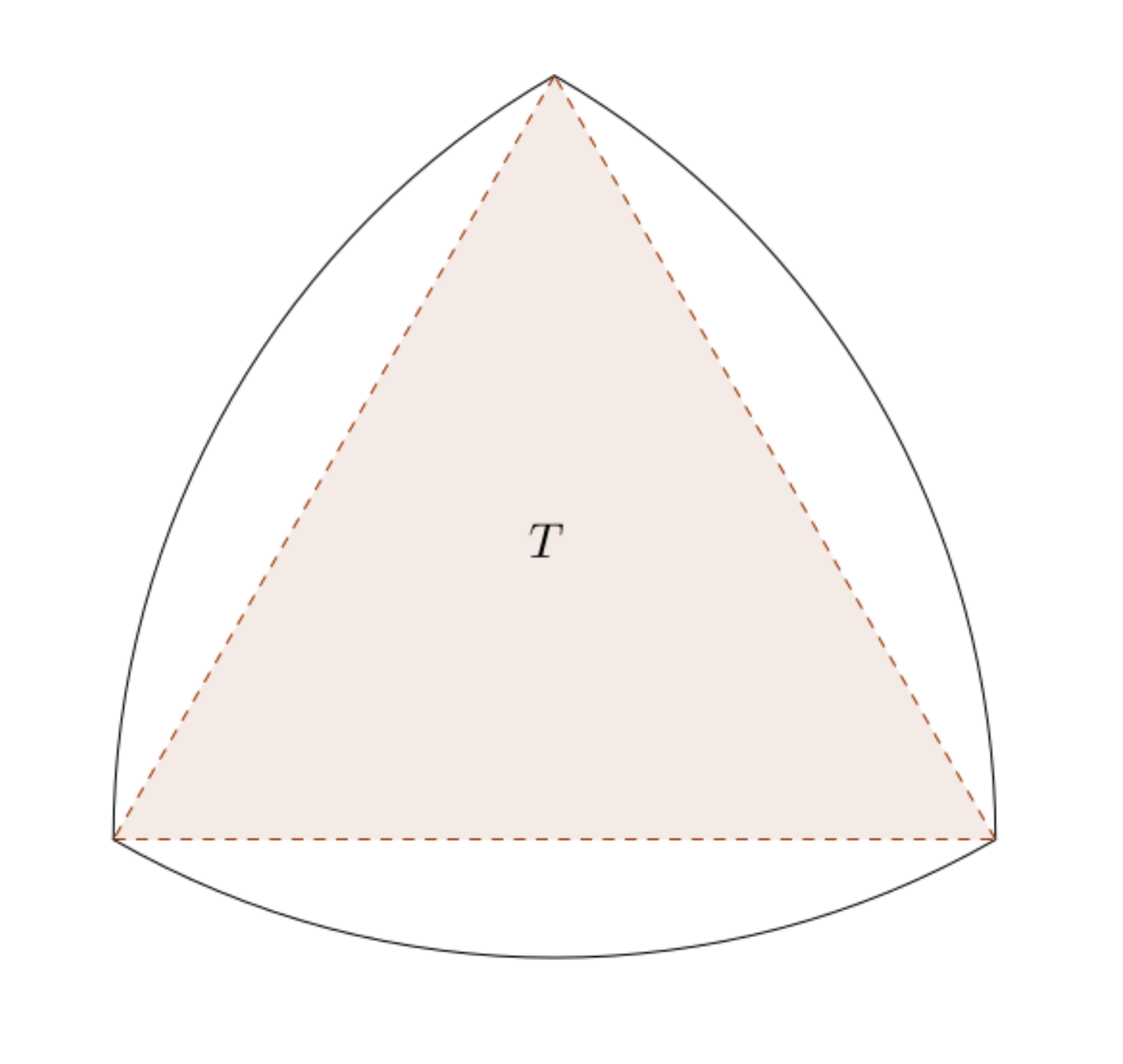} 
\caption{The Reuleaux triangle generated by an equilateral triangle}
\end{figure}

\begin{prop} \label{reuleaux}
 Let $\mathcal{R} \subset \R^2$ be the Reuleaux triangle generated by the open equilateral triangle $T \subset \mathcal{R}$ of unit side length. Suppose that the first eigenfunction is in $C^2(\mathcal{R})\cap C(\overline{\mathcal{R}})$. Then,
 \[ \mu_1(\mathcal{R}) < \pi^2.\]
\end{prop}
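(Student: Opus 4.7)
I would argue by contradiction. Proposition~\ref{isodiametric} applied with $\text{diam}(\mathcal{R})=1$ gives $\mu_1(\mathcal{R})\le\pi^2$, so suppose equality holds and let $u\in C^2(\mathcal{R})\cap C(\overline{\mathcal{R}})$ be the associated first eigenfunction: strictly positive in $\mathcal{R}$, vanishing on $\partial\mathcal{R}$, and satisfying $-\lam_N(D^2 u)=\pi^2 u$ in the classical sense.

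The first step is a rigidity property along \emph{diametral chords}. Let $\gamma$ be any segment of length $1=\text{diam}(\mathcal{R})$ contained in $\overline{\mathcal{R}}$ with both endpoints on $\partial\mathcal{R}$, parametrized by arc length on $[0,1]$, and set $v(t):=u(\gamma(t))$. Then $v\in C^2([0,1])$, $v\ge 0$, $v(0)=v(1)=0$, and $v''(t)=\langle D^2u(\gamma(t))\gamma'(t),\gamma'(t)\rangle\le \lam_N(D^2u(\gamma(t)))=-\pi^2 v(t)$. Multiplying $v''+\pi^2 v\le 0$ by $\sin(\pi t)>0$ and integrating by parts twice (the boundary terms vanish because $v(0)=v(1)=0$) yields $\int_0^1(v''+\pi^2 v)\sin(\pi t)\,dt=0$; combined with the inequality, this forces $v''+\pi^2 v\equiv 0$ on $(0,1)$, hence $v(t)=c\sin(\pi t)$ for some $c\ge 0$.

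Next I would exploit the abundance of diametral chords in $\mathcal{R}$. Each of the three vertices $A,B,C$ of $\mathcal{R}$ lies at distance exactly $1$ from every point of the opposite arc, so through every interior point $Q$ of the inscribed equilateral triangle $T$ there pass three diametral chords, one from each vertex of $\mathcal{R}$ to the opposite arc. Since $Q$ lies on none of the sides of $T$, these chords determine three pairwise distinct lines through $Q$, say with unit directions $\xi_1,\xi_2,\xi_3$. By the rigidity just proved, $\langle D^2 u(Q)\xi_i,\xi_i\rangle=-\pi^2 u(Q)$ for $i=1,2,3$. An elementary linear-algebra computation shows that the only symmetric $2\times 2$ matrix whose quadratic form takes a prescribed value $k$ on three pairwise distinct directions is $kI$; hence $D^2 u(Q)=-\pi^2 u(Q)\,I$ throughout $\text{int}(T)$.

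Finally, the identity $u_{xy}\equiv 0$ on the convex (hence simply connected) set $\text{int}(T)$ implies $u(x,y)=f(x)+g(y)$ for some $C^2$ functions $f,g$. Substituting into $u_{xx}+\pi^2 u=0$ gives $f''(x)+\pi^2 f(x)=-\pi^2 g(y)$; separation of variables forces $g$ to be constant, and then the companion equation $u_{yy}+\pi^2 u=0$ forces $f$ to be the opposite constant, so $u\equiv 0$ on $\text{int}(T)$, contradicting the strict positivity of the first eigenfunction. The main technical point is the promotion of chord-by-chord information to pointwise control of the full Hessian, which is where the $C^2$ hypothesis together with the two-parameter family of diametral chords characteristic of the Reuleaux triangle both enter decisively.
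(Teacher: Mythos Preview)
Your proof is correct and follows the same route as the paper (chord rigidity $\Rightarrow$ $D^2u=-\pi^2 u\,I$ on $T$ $\Rightarrow$ contradiction), with only cosmetic differences: you prove the one-dimensional rigidity by integrating against $\sin(\pi t)$ rather than citing \cite[Theorem~2.2]{BNV}, you use three chords through each interior point instead of two (the paper observes that each chord direction must actually be an \emph{eigenvector} for $\lambda_2(D^2u)$, so two distinct directions already force $D^2u$ to be scalar), and your separation-of-variables endgame via $u=f(x)+g(y)$ is arguably tidier than the paper's product ansatz. One small point to tighten: the hypothesis gives only $u\in C^2(\mathcal{R})\cap C(\overline{\mathcal{R}})$, hence $v\in C^2((0,1))\cap C([0,1])$ rather than $C^2([0,1])$; your integration by parts is nonetheless valid because $v''\le 0$ makes $v$ concave, and for a nonnegative concave $v$ with $v(0)=v(1)=0$ one has $|\,\epsilon\,v'(\epsilon)\,|\le v(\epsilon)\to 0$ (and similarly at $t=1$), so the boundary terms vanish in the limit.
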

\begin{proof}
Let $u \in C^2(\mathcal{R})\cap C(\overline{\mathcal{R}})$ be a positive eigenfunction associated to the first eigenvalue $\mu_1(\mathcal{R})$. We know, by Proposition \ref{isodiametric}, that $$\mu_1(\mathcal{R})\leq \pi^2.$$ 
Suppose by contradiction that $\mu_1(\mathcal{R})=\pi^2$. Let $A$, $B$ and $C$ be the vertices of $T$, and let $P \in T$ be a point. 
Let $D \in \partial \mathcal{R}$ be the intersection of the line passing through $A$ and $P$ with $\partial \mathcal{R}$. Let $AD$ be the segment, of unit length, parametrized as
\[ AD= \{ x \in \overline{\mathcal{R}}\,|\,x= tD + (1-t)A,\,t \in [0,1]\}. \]
Set $e_1 := AD \in S^1$. Let $v$ be the restriction of $u$ to the segment $AD$, namely,
\[ v(t) := u(tD+(1-t)A).\]
The function $v$ satisfies $v(0)=v(1)=0$, and
\begin{align} \label{1Dsuper} 
\begin{split}
-v''(t) & = -\langle D^2 u(tD+(1-t)A) \cdot e_1, e_1 \rangle 
\\ & \geq -\lambda_2(D^2 u)(tD+(1-t)A) \\ & = \pi^2 u(tD+(1-t)A) = \pi^2 v(t).
\end{split}
\end{align}
From \cite[Theorem 2.2]{BNV} (applied to the operator $F=\Delta + \lambda_1$), $v$ coincides with a multiple of the eigenfunction in $(0,1)$, namely $v(t)=c \sin{(\pi t)}$ for some $c>0$. Therefore, equality holds in \eqref{1Dsuper}, which implies, in particular,
\[ -\langle D^2 u(P) \cdot e_1, e_1 \rangle = \pi^2 u(P),\]
$e_1$ being an eigenvector associated to $\lambda_2(D^2 u(P))$. Repeating the same reasoning with the line passing through $B$ and $P$, we would obtain that, for a unit vector $e_2 \in S^1$ different from $e_1$, it holds
\[ -\langle D^2 u(P) \cdot e_2, e_2 \rangle = \pi^2 u(P). \]
Therefore,
\[ -D^2 u(P) = \pi^2 u(P) I\]
for every $P \in T$. The conditions $$-u_{xx}=-u_{yy}=\pi^2 u$$ imply that $u$ is of the form
\[ u(x,y) = (a_1 \sin{\pi x}+b_1\cos{\pi x})(a_2 \sin{\pi y}+b_2\cos{\pi y}).\]
On the other hand, the condition $$u_{xy}=0$$ implies that $$a_1=a_2=b_1=b_2=0,$$ a contradiction to the fact that $u>0$ in $T$.
\end{proof}

\begin{rem}
In the proof of Proposition \ref{reuleaux}, we made the assumption that the eigenfunction belongs to $C^2(\mathcal{R})\cap C(\overline{\mathcal{R}})$. While the few explicit examples currently known (balls and hyperrectangles) support the conjecture that eigenfunctions belong to $C^\infty(\Omega) \cap C(\overline{\Omega})$, the optimal regularity of eigenfunctions of the truncated Laplacian is far from being understood, due to the high degeneracy of the differential operator. It is worth mentioning that, in the case of the Monge-Amp\`{e}re operator, which is also fully nonlinear, but enjoys particular structural properties, eigenfunctions belong to $C^\infty(\Omega) \cap C^{0,\beta}(\overline{\Omega})$ for every $\beta \in (0,1)$ (see \cite[Theorem 1.1]{le}). 
\end{rem}

Proposition \ref{isodiametric} readily implies that the ball of radius $\frac{d}{2}$ maximizes $\mu_1(\Omega)$ among convex domains with fixed diameter $\text{diam}(\Omega) = d>0$. The maximizer is not unique, since every other convex set with diameter $d$, contained in $B_{\frac{d}{2}}$, has the same eigenvalue.

We will denote by $\mathcal{K}_N$ the set 
\[ \mathcal{K}_N:= \Big\{ \Omega \subset \R^N\,|\,\Omega \text{ open, bounded and convex}\Big\}.\]

\begin{prop} \label{maxdiameter} Fix $d>0$. Then, 
the ball is a solution of the maximization problem
\[ \sup \Big\{\mu_1(\Omega)\,|\,\Omega \in \mathcal{K}_N,\,\text{diam}(\Omega)=d \Big\}.\]
\end{prop}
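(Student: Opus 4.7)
The plan is essentially to read off the result from the two previous ingredients already in hand. First, I would fix any $\Omega \in \mathcal{K}_N$ with $\text{diam}(\Omega) = d$. By Proposition \ref{isodiametric} we immediately get
\[ \mu_1(\Omega) \leq \frac{\pi^2}{[\text{diam}(\Omega)]^2} = \frac{\pi^2}{d^2}, \]
so $\pi^2/d^2$ is an upper bound for the supremum.

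Next I would verify that this upper bound is attained by the ball $B_{d/2}$, which indeed lies in the admissible class since it is open, bounded, convex and has diameter exactly $d$. By the explicit formula \eqref{eig.ball} with $r = d/2$,
\[ \mu_1(B_{d/2}) = \frac{\pi^2}{4(d/2)^2} = \frac{\pi^2}{d^2}. \]
Combining the two displays gives $\mu_1(B_{d/2}) \geq \mu_1(\Omega)$ for every admissible $\Omega$, so the ball is a maximizer.

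There is no real obstacle here: the content is entirely contained in Proposition \ref{isodiametric} and in the closed-form expression for $\mu_1(B_r)$. The only subtlety worth flagging, already noted just before the statement, is that the maximizer is not unique, since by Corollary \ref{corol} any convex body with diameter $d$ that happens to sit inside some ball of radius $d/2$ realizes the same value $\pi^2/d^2$; thus uniqueness is not claimed and need not be addressed in the proof.
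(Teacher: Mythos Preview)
Your proof is correct and follows exactly the paper's approach: combine Proposition \ref{isodiametric} with the explicit value \eqref{eig.ball} for the ball to see that $\mu_1(\Omega) \leq \pi^2/d^2 = \mu_1(B_{d/2})$. The paper's proof is just this one line, and your additional remark on non-uniqueness via Corollary \ref{corol} is accurate but not needed for the statement as phrased.
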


\begin{proof}
By Proposition \ref{isodiametric} and \eqref{eig.ball} we have
\[ \mu(\Omega) \leq \frac{\pi^2}{d^2} = \mu(B_{\frac{d}{2}})\]
for every convex domain $\Omega$ with $\text{diam}(\Omega)=d$.
\end{proof}

Proposition \ref{maxdiameter} directly implies that the ball maximizes $\mu_1(\Omega)$ under a perimeter or a volume constraint.

\begin{prop} \label{maxperimeter}
The ball is a solution of the maximization problem
\[ \sup \Big\{\mu_1(\Omega)\,|\,\Omega \in \mathcal{K}_N,\,P(\Omega)=c\Big\},\]
where $c>0$ is fixed. Moreover, if $N \geq 3$, then the ball is the unique maximizer.
\end{prop}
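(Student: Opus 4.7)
The plan is to deduce Proposition~\ref{maxperimeter} from Proposition~\ref{isodiametric} by comparing the diameter of a competitor $\Omega$ with the diameter of the ball of the same perimeter. Fix $\Omega \in \mathcal{K}_N$ with $P(\Omega)=c$, and let $B_r$ be the ball with $P(B_r)=c$, so that $\text{diam}(B_r)=2r$. The core reduction is to establish $\text{diam}(\Omega) \geq 2r$; combined with Proposition~\ref{isodiametric} and \eqref{eig.ball}, this immediately yields
\[
\mu_1(\Omega) \leq \frac{\pi^2}{[\text{diam}(\Omega)]^2} \leq \frac{\pi^2}{(2r)^2} = \mu_1(B_r),
\]
proving that the ball is a maximizer.

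For the diameter bound I would invoke the classical fact that among convex bodies of diameter $d$ in $\R^N$, the ball of diameter $d$ has the largest perimeter, i.e., $P(\Omega) \leq P(B_{d/2})$ for every $\Omega \in \mathcal{K}_N$ with $\text{diam}(\Omega)=d$. The cleanest derivation uses Cauchy's surface area formula
\[
P(\Omega) = \frac{1}{\omega_{N-1}} \int_{S^{N-1}} \mathcal{H}^{N-1}\bigl(\pi_{v^\perp}(\Omega)\bigr)\,d\sigma(v),
\]
where $\omega_{N-1}$ is the volume of the unit ball in $\R^{N-1}$ and $\pi_{v^\perp}$ denotes the orthogonal projection onto the hyperplane $v^\perp$, combined with the Bieberbach isodiametric inequality in $\R^{N-1}$: since $\text{diam}(\pi_{v^\perp}(\Omega)) \leq \text{diam}(\Omega)$, each projection has $(N-1)$-volume at most $\omega_{N-1}(\text{diam}(\Omega)/2)^{N-1}$, and integration over $S^{N-1}$ gives $P(\Omega) \leq P(B_{\text{diam}(\Omega)/2})$. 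Setting $P(\Omega)=P(B_r)$ then forces $\text{diam}(\Omega)\geq 2r$.

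For the uniqueness claim when $N \geq 3$, suppose $\mu_1(\Omega)=\mu_1(B_r)$. Then equality must hold throughout the above chain: $\text{diam}(\Omega)=2r$, and the equality cases in the estimate of each projection (both in the monotonicity $\text{diam}(\pi_{v^\perp}(\Omega))\leq\text{diam}(\Omega)$ and in Bieberbach) force $\pi_{v^\perp}(\Omega)$ to be an $(N-1)$-dimensional Euclidean ball of radius $r$ for a.e.\ $v\in S^{N-1}$, and hence for every $v$ by continuity of the projection in the Hausdorff metric. The concluding step invokes the classical Nakajima-type characterization stating that a convex body in $\R^N$, with $N\geq 3$, all of whose hyperplane projections are congruent Euclidean balls must itself be a Euclidean ball; this yields $\Omega=B_r$.

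The main obstacle is this final rigidity step. The projection-based characterization genuinely fails in the plane: there the analogous condition only says that $\Omega$ has constant width $d$, a class that contains many non-circular planar convex sets such as the Reuleaux polygons. This is exactly why the uniqueness statement is restricted to $N\geq 3$.
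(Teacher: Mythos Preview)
Your argument is correct and follows essentially the same route as the paper: both reduce the claim to the fact that, among convex bodies of prescribed diameter, the ball maximizes the perimeter (uniquely when $N\geq 3$), and then combine this with Proposition~\ref{isodiametric} and \eqref{eig.ball}. The only difference is that the paper simply cites this perimeter--diameter inequality (Theorem~5 in \cite{maggiponsiglionepratelli}), whereas you supply a self-contained derivation via Cauchy's surface area formula, the Bieberbach isodiametric inequality in $\R^{N-1}$, and a Nakajima-type rigidity result for the equality case; your added explanation of why uniqueness fails in $N=2$ (constant-width bodies) is also consistent with the paper's statement.
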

\begin{proof}
The proof is a consequence of Proposition \ref{maxdiameter}, and the fact that the ball maximizes perimeter under a diameter constraint among convex sets, being the unique maximizer if $N \geq 3$ (see \cite[Theorem 5]{maggiponsiglionepratelli}).
\end{proof}

\begin{prop}[{\bf Reverse Faber-Krahn inequality}] \label{maxvolume}
For $c>0$
the ball is the unique maximizer of the maximization problem
\[ \sup \Big\{\mu_1(\Omega)\,|\,\Omega \in \mathcal{K}_N,\,|\Omega|=c \Big\}.\] 
\end{prop}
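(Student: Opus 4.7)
The plan is to combine Proposition \ref{isodiametric} with the classical \emph{isodiametric inequality} (Bieberbach's inequality): among all measurable sets of given volume in $\R^N$, the ball minimizes the diameter, and the minimum is uniquely attained by the ball. Equivalently, if $|\Omega| = |B_r|$, then $\text{diam}(\Omega) \geq \text{diam}(B_r) = 2r$, with equality if and only if $\Omega = B_r$.

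Given this, the proof essentially writes itself. Let $\Omega \in \K$ with $|\Omega| = c$, and let $B_r$ denote the ball with $|B_r| = c$. First I would invoke Proposition \ref{isodiametric} to get
\[
\mu_1(\Omega) \leq \frac{\pi^2}{[\text{diam}(\Omega)]^2}.
\]
Next, by the isodiametric inequality, $\text{diam}(\Omega) \geq 2r$, so
\[
\frac{\pi^2}{[\text{diam}(\Omega)]^2} \leq \frac{\pi^2}{4r^2} = \mu_1(B_r),
\]
using the explicit value \eqref{eig.ball}. Chaining these yields $\mu_1(\Omega) \leq \mu_1(B_r)$.

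For uniqueness, suppose $\mu_1(\Omega) = \mu_1(B_r)$. Then the second inequality above must be an equality, forcing $\text{diam}(\Omega) = 2r$; the equality case of the isodiametric inequality then gives $\Omega = B_r$. There is no real obstacle here: all of the spectral work was already done in Proposition \ref{isodiametric}, and the geometric content reduces to a classical and well-documented inequality whose equality case is sharp, so the statement follows immediately once the isodiametric inequality is cited.
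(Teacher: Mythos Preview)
Your proof is correct. The paper takes a slightly longer chain: it first deduces the perimeter-constrained result (Proposition~\ref{maxperimeter}) from Proposition~\ref{maxdiameter}, using that the ball maximizes perimeter among convex sets of given diameter, and then invokes the classical isoperimetric inequality to pass from the perimeter constraint to the volume constraint. You instead go directly from the diameter bound of Proposition~\ref{isodiametric} to the volume constraint via the isodiametric (Bieberbach) inequality, whose equality case immediately yields uniqueness. Your route is one step shorter and arguably more transparent for the uniqueness claim; the paper's route has the incidental benefit of recording the intermediate perimeter-constrained statement along the way. In both cases the substance is the same: Proposition~\ref{isodiametric} does all the spectral work, and the remainder is a classical extremal property of the ball with a sharp equality characterization.
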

\begin{proof}
The proof follows from Proposition \ref{maxperimeter} and the well-known isoperimetric property of the ball (see, for instance, \cite[Theorem 14.1]{maggi}). 
\end{proof}

\section{Minimization of the first eigenvalue under constraints} \label{minimizationsection}

In Section \ref{preliminary} we have seen that the problem of minimizing $\mu_1(\Omega)$ 
among bounded, convex sets of fixed volume 
does not have a solution. This holds since a minimizing sequence is given by a sequence of hyperrectangles whose diameter tends to infinity. Then, it is natural to wonder whether it makes sense to consider the minimization problem under other kinds of constraints. In this section we will mainly restrict to planar, convex sets, $\Omega \subset \mathbb{R}^2$, and we will provide some results which support the following conjectures.

\begin{conj}
The minimization problem
\begin{equation} \label{minperimeter} \inf \Big\{\mu_1(\Omega)\,|\,\Omega \in \mathcal{K}_2,\,P(\Omega)=c\Big\} \qquad (c > 0)\end{equation}
does not admit a solution. A minimizing sequence is given by a sequence of rectangles of constant perimeter, with one side length tending to zero.
\end{conj}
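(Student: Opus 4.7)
I would identify the infimum explicitly as $4\pi^2/c^2$, exhibit the announced minimizing sequence, and rule out attainment by combining the lower bound with strict monotonicity.

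\emph{Minimizing sequence.} Take rectangles $R_n = (0,a_n)\times(0,b_n)$ with $2(a_n+b_n)=c$ and $b_n \to 0^+$. Formula \eqref{eig.cube} (with $N=2$) gives
\[ \mu_1(R_n) = \frac{\pi^2}{a_n^2+b_n^2} \longrightarrow \frac{4\pi^2}{c^2}, \]
so the infimum in \eqref{minperimeter} is at most $4\pi^2/c^2$, and these rectangles are the proposed minimizing sequence.

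\emph{Matching lower bound.} Given $\Omega \in \mathcal{K}_2$ with $P(\Omega)=c$, let $d=\text{diam}(\Omega)$ and choose coordinates so that a diameter-realizing pair sits at $p=(0,0)$ and $q=(d,0)$. The conditions $|x-p|\leq d$ and $|x-q|\leq d$ on $x \in \overline{\Omega}$ confine $\overline{\Omega}$ to the strip $[0,d]\times\R$. Set $h = \max\{y : (x,y) \in \overline{\Omega}\}$, $k = -\min\{y : (x,y) \in \overline{\Omega}\}$, and $w := h+k$. Then $\Omega \subset R := [0,d] \times [-k,h]$, a rectangle of sides $d\times w$, so monotonicity and \eqref{eig.cube} give
\[ \mu_1(\Omega) \geq \mu_1(R) = \frac{\pi^2}{d^2+w^2}. \]
The main geometric ingredient is the planar inequality $P(\Omega) \geq 2\sqrt{d^2+w^2}$. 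To prove it, split $\partial\Omega$ into its two arcs joining $p$ to $q$; the upper arc passes through a point $v=(x_v,h)$, so has length at least $|p-v|+|v-q| \geq \sqrt{d^2 + 4h^2}$ (minimum of the broken-line distance over $x_v \in [0,d]$, attained at $x_v=d/2$), and analogously the lower arc has length at least $\sqrt{d^2+4k^2}$. Convexity of $t \mapsto \sqrt{d^2+4t^2}$ then yields
\[ P(\Omega) \geq \sqrt{d^2+4h^2}+\sqrt{d^2+4k^2} \geq 2\sqrt{d^2+(h+k)^2} = 2\sqrt{d^2+w^2}. \]
Combining, $\mu_1(\Omega) \geq 4\pi^2/P(\Omega)^2 = 4\pi^2/c^2$, so the infimum equals $4\pi^2/c^2$.

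\emph{Nonattainment.} If $\Omega^* \in \mathcal{K}_2$ attained the infimum, the chain above would be an equality throughout. In particular $\mu_1(\Omega^*) = \mu_1(R)$, and by strict monotonicity of $\mu_1$ under strict inclusion this forces $\Omega^* = R$. But then $\text{diam}(\Omega^*) = \text{diam}(R) = \sqrt{d^2+w^2}$, which contradicts $\text{diam}(\Omega^*)=d$ unless $w=0$, impossible for an open set $\Omega^* \in \mathcal{K}_2$. Hence no minimizer exists and the rectangles of Step~1 form a minimizing sequence.

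The main technical point I expect to require separate work is justifying strict monotonicity of $\mu_1$ under strict set inclusion (not explicitly stated in the excerpt, although the non-strict version is). I would attack this via the strong maximum principle for $-\lambda_N(D^2\cdot)$ in the viscosity sense, combined with the characterization \eqref{definitioneigenvalue}; without such strictness, one only obtains that the infimum is $4\pi^2/c^2$ and is either not attained or attained exactly on configurations forced to be degenerate.
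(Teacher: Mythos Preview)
The paper does not prove this statement: it is listed as an open conjecture, with only the heuristic remark at the end of Section~\ref{minimizationsection} that ``a minimizing sequence is given by any sequence of rectangles as in Proposition~\ref{shrinking}''. So there is no paper proof to compare against, and your argument, if correct, would actually settle the conjecture.

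Your first two steps are correct and go well beyond what the paper offers. The minimizing sequence computation is immediate from \eqref{eig.cube}, and the lower bound $\mu_1(\Omega)\geq 4\pi^2/c^2$ via the enclosing rectangle $R$ together with the elementary perimeter estimate $P(\Omega)\geq 2\sqrt{d^2+w^2}$ is a clean argument that the paper does not contain. These two steps already pin down the infimum as $4\pi^2/c^2$.

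The nonattainment step, however, has a genuine gap: strict monotonicity of $\mu_1$ under strict inclusion is \emph{false}, and the paper itself provides the counterexamples. Corollary~\ref{corol} shows that any convex $\Omega\subset B_{d/2}$ with $\text{diam}(\Omega)=d$ satisfies $\mu_1(\Omega)=\mu_1(B_{d/2})$, and such $\Omega$ can be strictly smaller than the ball. Hence you cannot conclude $\Omega^*=R$ from $\mu_1(\Omega^*)=\mu_1(R)$, and the strong maximum principle route you propose will not rescue this.

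The natural fix is to trace equality in the \emph{perimeter} inequality instead. If $\mu_1(\Omega^*)=4\pi^2/c^2$, then in particular $P(\Omega^*)=2\sqrt{d^2+w^2}$, and your own chain of inequalities then forces each boundary arc to be the two-segment broken line through $(d/2,\pm h)$ with $h=k>0$; thus $\Omega^*$ is the open rhombus with vertices $(0,0)$, $(d/2,h)$, $(d,0)$, $(d/2,-h)$, and $2h\leq d$ since $d=\text{diam}(\Omega^*)$. This rhombus lies in the disk of radius $d/2$ centred at $(d/2,0)$, so Corollary~\ref{corol} gives $\mu_1(\Omega^*)=\pi^2/d^2>\pi^2/(d^2+4h^2)=4\pi^2/c^2$, contradicting attainment. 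With this modification your argument goes through.
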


\begin{conj}
The minimization problem
\begin{equation} \label{mindiameter} \inf \Big\{\mu_1(\Omega)\,|\,\Omega \in \mathcal{K}_2,\,\text{diam}(\Omega)=d \Big\} \qquad (d > 0)\end{equation}
admits a solution.
\end{conj}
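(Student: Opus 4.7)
The plan is to apply the direct method of the calculus of variations using the Hausdorff metric on compact convex sets. Let $\{\Omega_n\} \subset \mathcal{K}_2$ with $\text{diam}(\Omega_n) = d$ be a minimizing sequence. After translating each set, we may assume $\Omega_n \subset \overline{B_d(0)}$, so by Blaschke's selection theorem a subsequence (not relabeled) converges in the Hausdorff metric to a compact convex set $K \subset \R^2$. Continuity of the diameter under Hausdorff convergence gives $\text{diam}(K) = d$.

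We distinguish two cases depending on whether $K$ has non-empty interior. A planar convex set with empty interior is a segment, so if $K$ is degenerate it has length $d$, and for each $\eta > 0$ and $n$ large, $\Omega_n$ is contained in a rectangle $R_n$ of sides $d$ and $\eta$. Monotonicity and the explicit formula \eqref{eig.cube} then give
\[ \mu_1(\Omega_n) \geq \mu_1(R_n) = \frac{\pi^2}{d^2+\eta^2}, \]
so $\lim_n \mu_1(\Omega_n) \geq \pi^2/d^2$. Combined with Proposition \ref{isodiametric}, the infimum would equal $\pi^2/d^2$, which by \eqref{eig.ball} is realized by $B_{d/2}$, and existence is trivial. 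We may therefore assume henceforth that $K$ has non-empty interior.

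The main step is then to prove the continuity $\mu_1(\Omega_n) \to \mu_1(K)$. Fix an interior point $c$ of $K$ with $B_r(c) \subset K$, and set $K^\pm_\delta := c + (1 \pm \delta)(K - c)$ for $\delta \in (0,1)$. A standard argument on support functions shows that, once $\text{dist}_H(\Omega_n, K) < \delta r$, one has the two-sided inclusion $K^-_\delta \subset \Omega_n \subset K^+_\delta$. Monotonicity combined with the scaling property of $\mu_1$ recorded in Section \ref{preliminary} yields
\[ \frac{\mu_1(K)}{(1+\delta)^2} = \mu_1(K^+_\delta) \leq \mu_1(\Omega_n) \leq \mu_1(K^-_\delta) = \frac{\mu_1(K)}{(1-\delta)^2}, \]
and letting $n \to \infty$ followed by $\delta \to 0$ gives $\mu_1(\Omega_n) \to \mu_1(K)$, so $K$ realizes the infimum.

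The hardest part is securing the two-sided inclusion $K^-_\delta \subset \Omega_n \subset K^+_\delta$ from Hausdorff convergence alone. This is a genuinely convex-geometric fact: using that the support functions of convex bodies converge uniformly on $S^1$ under Hausdorff convergence, together with the lower bound $h_K(\xi) - \langle c, \xi \rangle \geq r$ (valid because $B_r(c) \subset K$), a short computation shows that the claimed inclusions hold as soon as $|h_{\Omega_n}(\xi) - h_K(\xi)| \leq \delta r$ uniformly in $\xi \in S^1$. The restriction to the plane in the conjecture is convenient at the non-degeneracy step, since any degenerate limit is automatically one-dimensional and is dispatched by the rectangle comparison; in higher dimensions the limit could drop to any intermediate dimension, requiring a more subtle analysis.
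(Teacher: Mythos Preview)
The paper states this as a \emph{conjecture} and does not prove it. The discussion following Propositions~\ref{continuityhausdorff} and~\ref{shrinking} outlines a conditional argument along the same lines as yours (Blaschke selection plus Hausdorff continuity of $\mu_1$), but in order to exclude a degenerate limit it invokes Proposition~\ref{reuleaux}, whose extra hypothesis that the Reuleaux eigenfunction lies in $C^2(\mathcal{R})$ is unverified; this is precisely why the statement is left open.

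Your handling of the degenerate case is the genuine improvement over the paper's discussion. If a minimizing subsequence collapses to a segment, your rectangle comparison gives $m=\lim_n\mu_1(\Omega_n)\geq\pi^2/d^2$; since $\mu_1(B_{d/2})=\pi^2/d^2$ with $\text{diam}(B_{d/2})=d$, the ball $B_{d/2}$ already realizes the infimum, and existence follows without any appeal to the Reuleaux triangle. In the non-degenerate case your support-function sandwich $K_\delta^-\subset\Omega_n\subset K_\delta^+$ is exactly the content of Proposition~\ref{continuityhausdorff} (the paper cites \cite[p.~359]{BBI} for the same inclusion). Altogether your argument gives an \emph{unconditional} proof of the conjecture, strictly more than the paper claims. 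One cosmetic remark: in the non-degenerate case the minimizer should be written as $\text{int}(K)\in\mathcal{K}_2$ rather than $K$, since $\mu_1$ is defined on open sets; this does not affect the argument.
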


\begin{conj}
The Reuleaux triangle is a minimizer for Problem \eqref{mindiameter} 
\end{conj}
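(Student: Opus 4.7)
The conjecture asserts that among planar open, bounded, convex sets of fixed diameter $d$, the Reuleaux triangle $\mathcal{R}$ minimizes $\mu_1$. My approach would proceed in three steps. \emph{Step 1 (existence of a minimizer):} I would work in the closed class $\{\Omega \in \mathcal{K}_2 : \text{diam}(\Omega) \leq d\}$ (since a Hausdorff limit of sets of diameter $d$ may have strictly smaller diameter), and observe that by the scaling identity $\mu_1(t\Omega) = \mu_1(\Omega)/t^2$, every infimum for the relaxed problem is attained at a set of diameter exactly $d$. Compactness modulo translation follows from Blaschke's selection theorem, so one only needs lower semicontinuity of $\Omega \mapsto \mu_1(\Omega)$ along Hausdorff-convergent sequences of convex sets, which should be within reach via the variational characterization in \eqref{definitioneigenvalue} by constructing Lipschitz cut-off test functions that are compatible with the viscosity supersolution condition.

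\emph{Step 2 (reduction to constant-width sets):} I would show that any minimizer must have constant width equal to $d$. If a minimizer $\Omega^\star$ had width strictly smaller than $d$ in some direction $\nu$, then one could inflate $\Omega^\star$ slightly in the $\nu$-direction while keeping the diameter equal to $d$; by the strict monotonicity of $\mu_1$ with respect to strict inclusion, this would strictly decrease $\mu_1(\Omega^\star)$, contradicting optimality. The delicacy here is to arrange the inflation so that no pair of points in the enlarged set reaches distance greater than $d$; this can be done by perturbing only in a neighborhood that stays well away from the points realizing the diameter of $\Omega^\star$.

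\emph{Step 3 (optimality of the Reuleaux triangle among constant-width sets):} This is the main obstacle. The Reuleaux triangle is distinguished within the class of constant-width planar sets by two classical extremality properties: by Blaschke--Lebesgue it has minimum area, and it attains the Jung circumradius $d/\sqrt{3}$ featuring in \eqref{lowerboundjung}. The challenge is to translate either geometric extremality statement into a spectral one for $\mu_1$. A natural line of attack would be to build, for every constant-width set $\Omega$, a comparison subsolution inherited from the eigenfunction on a Reuleaux triangle of appropriate size suitably positioned inside $\Omega$; alternatively, a shape-derivative / local perturbation analysis could try to rule out all shapes near $\mathcal{R}$ as better candidates. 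Both routes are obstructed by the almost complete absence of a regularity theory for first eigenfunctions of the truncated Laplacian (see the Remark after Proposition \ref{reuleaux}): even the strict inequality $\mu_1(\mathcal{R}) < \pi^2$ was only established there under an unproven $C^2$ assumption. For this reason I expect any rigorous resolution of the conjecture to require, as a preliminary step, a substantial advance in the regularity and structure theory for eigenfunctions on general convex planar domains.
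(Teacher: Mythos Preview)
The statement you are addressing is stated in the paper as a \emph{conjecture}; the paper does not prove it. What the paper does provide is a partial, conditional discussion: using Blaschke selection together with Proposition~\ref{continuityhausdorff} (continuity of $\mu_1$ under Hausdorff convergence of nondegenerate convex sets) and Proposition~\ref{shrinking} (the degenerate limit value $\pi^2/d^2$), the paper argues that \emph{if} the eigenfunction on the Reuleaux triangle is $C^2$ so that Proposition~\ref{reuleaux} applies, then a minimizer exists; the reduction to bodies of constant width is obtained by invoking an external argument from \cite{bogoselhenrotlucardesi}. Step~3 is left entirely open, and your assessment of it as the main obstruction, tied to the missing regularity theory, matches the paper's position exactly.

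Your Step~2, however, contains a genuine error. You invoke ``strict monotonicity of $\mu_1$ with respect to strict inclusion'' to argue that inflating a minimizer in a thin direction strictly lowers the eigenvalue. But strict monotonicity fails for this operator: Corollary~\ref{corol} shows that any convex $\Omega$ contained in $B_{\text{diam}(\Omega)/2}$ satisfies $\mu_1(\Omega)=\mu_1(B_{\text{diam}(\Omega)/2})$, so for instance the inscribed square and the disk have the same $\mu_1$ despite strict inclusion. Your inflation argument therefore does not go through as stated; this is presumably why the paper defers to the method of \cite{bogoselhenrotlucardesi} rather than arguing directly.

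Your Step~1 also has a gap that the paper's discussion handles (conditionally) but yours does not. Relaxing to $\text{diam}(\Omega)\le d$ and rescaling does not prevent a minimizing sequence from collapsing to a segment, which lies outside $\mathcal{K}_2$; in that case Blaschke's theorem gives no admissible limit. The paper rules this out by combining Proposition~\ref{shrinking} (collapsing sequences have $\mu_1\to\pi^2/d^2$) with the strict inequality $\mu_1(\mathcal{R})<\pi^2/d^2$ of Proposition~\ref{reuleaux}, which in turn relies on the unproven $C^2$ regularity. Your outline needs either this ingredient or an independent mechanism to exclude degeneration.
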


We refer to \cite[Chapter 2.2]{henrotpierre} for the basic definitions and notions that we will use in this section. Our first result is the continuity of $\Omega \mapsto \mu_1 (\Omega)$ with respect to the Hausdorff convergence of open sets.

\begin{prop} \label{continuityhausdorff}
Let $\{\Omega_n\}$ be a sequence of nonempty, open convex sets which converge, with respect to the Hausdorff convergence of open sets, to the nonempty, open convex set $\Omega \in \mathcal{K}_N$. Then,
\[ \lim_{n \to +\infty} \mu_1(\Omega_n) = \mu_1(\Omega).\]
\end{prop}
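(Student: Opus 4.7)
The plan is to exploit two elementary properties of $\mu_1$ already recorded in Section \ref{preliminary}, namely its monotonicity with respect to inclusion and the scaling identity $\mu_1(t\Omega) = t^{-2}\mu_1(\Omega)$, together with the obvious translation invariance $\mu_1(\Omega + x_0) = \mu_1(\Omega)$ (which follows since the equation $-\lambda_N(D^2 u) = \mu u$ is invariant under translations). The goal is to sandwich each $\Omega_n$ between two slightly rescaled copies of $\Omega$, and conclude by squeezing.

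The key geometric input is the following well-known fact for convex sets (see e.g.\ \cite[Chapter 2.2]{henrotpierre}): since $\Omega \in \mathcal{K}_N$ is open, bounded, convex and nonempty, one may fix $x_0 \in \Omega$ and $\rho > 0$ with $B_\rho(x_0) \subset \Omega$. Hausdorff convergence of open convex sets then implies that for every $\varepsilon \in (0,1)$ there exists $n_\varepsilon \in \N$ such that, for all $n \geq n_\varepsilon$,
\[
x_0 + (1-\varepsilon)(\Omega - x_0) \;\subset\; \Omega_n \;\subset\; x_0 + (1+\varepsilon)(\Omega - x_0).
\]
This is the standard two-sided dilation sandwich: the inner inclusion uses that a point at Euclidean distance $\geq \varepsilon\, \mathrm{diam}(\Omega)$ from $\partial\Omega$ (which is the case for any point in $x_0 + (1-\varepsilon)(\Omega - x_0)$ up to a controlled constant depending on $\rho$) lies in $\Omega_n$ once the Hausdorff distance is small enough; the outer inclusion uses convexity of $\Omega_n$ and the fact that $\Omega_n \subset \Omega + B_\delta$ for $\delta$ small combines with $\Omega$ having $B_\rho(x_0)$ inside to give $\Omega + B_\delta \subset x_0 + (1+\varepsilon)(\Omega - x_0)$.

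Given this sandwich, monotonicity of $\mu_1$, combined with its scaling and translation invariance, yields
\[
\frac{1}{(1+\varepsilon)^2}\,\mu_1(\Omega) \;\leq\; \mu_1(\Omega_n) \;\leq\; \frac{1}{(1-\varepsilon)^2}\,\mu_1(\Omega)
\]
for all $n \geq n_\varepsilon$. Letting $n \to +\infty$ and then $\varepsilon \to 0^+$ gives $\mu_1(\Omega_n) \to \mu_1(\Omega)$, as desired.

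The main (and really the only) obstacle is verifying the two-sided dilation inclusion from Hausdorff convergence. The outer inclusion is the more delicate one: one must quantify the fact that inflating $\Omega$ by a small $\delta$-neighborhood is dominated by a homothety of ratio $1+\varepsilon$ centered at an interior point $x_0$, which is where the assumption $B_\rho(x_0) \subset \Omega$ becomes essential and one must take $\delta \leq \varepsilon \rho$ (after which convexity of $\Omega_n$ is not even needed for this direction, only $\Omega_n \subset \Omega + B_\delta$). The inner inclusion is analogously handled by requiring $\delta$ small compared with $\varepsilon\rho$. Once this geometric lemma is in place, the rest of the proof is the purely formal sandwich above.
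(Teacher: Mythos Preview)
Your proposal is correct and follows essentially the same route as the paper: the paper also invokes the two-sided homothety sandwich $(1-\varepsilon_n)\Omega \subset \Omega_n \subset (1+\varepsilon_n)\Omega$ (citing \cite[p.~359]{BBI}) and then applies monotonicity and scaling to squeeze. Your version is slightly more explicit in keeping track of the centre of dilation $x_0$ and in sketching why the sandwich holds, but the argument is the same.
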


\begin{proof}
Let $\{\Omega_n\}_{n\in\N}$ be a sequence in $\mathcal{K}_N$ which converges, with respect to the Hausdorff convergence of open sets, to $\Omega \in \mathcal{K}_N$. Then, there exists a sequence $\{\varepsilon_n\}$ with $\varepsilon_n \to 0$ such that
\[ (1-\varepsilon_n)\Omega \subset \Omega_n \subset (1+\varepsilon_n)\Omega.\]
(see, for instance, \cite[p. 359]{BBI}). Due to the monotonicity and the scaling properties of $\mu_1$, we have
\[ \frac{1}{(1+\varepsilon_n)^2} \mu_1(\Omega) \leq \mu_1(\Omega_n) \leq \frac{1}{(1-\varepsilon_n)^2} \mu_1(\Omega).\]
This implies
\[ \lim_{n \to +\infty} \mu_1(\Omega_n) = \mu_1(\Omega),\]
which is the claim.
\end{proof}

The next result concerns the asymptotic behaviour of a shrinking sequence of convex, planar sets.

\begin{prop} \label{shrinking}
Let $\{\Omega_n\}_{n \in \N}$ be a sequence of convex, open sets in $\R^2$ such that $\text{diam}(\Omega_n) \to d > 0$ and $|\Omega_n| \to 0$ as $n \to +\infty$. Then,
\[ \lim_{n \to +\infty} \mu_1(\Omega_n) = \frac{\pi^2}{d^2}.\]
\end{prop}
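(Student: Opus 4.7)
The upper bound is immediate: by Proposition \ref{isodiametric},
\[ \mu_1(\Omega_n) \leq \frac{\pi^2}{[\text{diam}(\Omega_n)]^2} \to \frac{\pi^2}{d^2} \]
since $\text{diam}(\Omega_n) \to d$. So the entire content of the proposition is the matching lower bound $\liminf_n \mu_1(\Omega_n) \geq \pi^2/d^2$. To obtain it, my plan is to trap each $\Omega_n$ inside a thin rectangle whose eigenvalue we know explicitly, and then invoke monotonicity.

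For each $n$, pick $x_n,y_n \in \overline{\Omega_n}$ with $|x_n-y_n|=\text{diam}(\Omega_n)=:d_n$. Let $\ell_n$ be the line through $x_n,y_n$, let $w_n^+$, $w_n^-$ be the maximal distances from points of $\Omega_n$ to $\ell_n$ on each side of $\ell_n$, and set $w_n := w_n^+ + w_n^-$. Let $R_n$ be the closed rectangle with one pair of sides parallel to $\ell_n$ (of length $w_n$) and the other pair perpendicular (of length $d_n$, since the projection of $\Omega_n$ onto $\ell_n$ has length exactly $d_n$: it is $\geq d_n$ because $x_n,y_n\in\Omega_n$, and $\leq d_n$ by the diameter bound). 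Then $\Omega_n\subset R_n$. The key geometric observation is that, by convexity, $\Omega_n$ contains the (possibly degenerate) quadrilateral with vertices $x_n,y_n,z_n^+,z_n^-$, where $z_n^\pm\in\overline{\Omega_n}$ realize the heights $w_n^\pm$; this quadrilateral has area $\tfrac{1}{2}d_n w_n^+ + \tfrac{1}{2}d_n w_n^- = \tfrac{1}{2}d_n w_n$. Hence
\[ w_n \leq \frac{2|\Omega_n|}{d_n} \to 0 \]
as $n\to +\infty$, since $|\Omega_n|\to 0$ and $d_n\to d>0$.

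Slightly enlarging $R_n$ if necessary to produce an open rectangle $R_n'\supset\Omega_n$ of dimensions $d_n'\times w_n'$ with $d_n'\to d$ and $w_n'\to 0$, monotonicity of $\mu_1$ together with the explicit formula \eqref{eig.cube} gives
\[ \mu_1(\Omega_n) \geq \mu_1(R_n') = \frac{\pi^2}{(d_n')^2+(w_n')^2} \xrightarrow[n\to +\infty]{} \frac{\pi^2}{d^2}, \]
which combined with the upper bound yields the claim.

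The only nontrivial ingredient is the width estimate $w_n\leq 2|\Omega_n|/d_n$, which is the geometric heart of the argument; everything else is a direct application of results already proved in the paper (Proposition \ref{isodiametric}, the eigenvalue of a hyperrectangle, and monotonicity under inclusion). I do not foresee a serious obstacle, although one should verify carefully that the construction of $R_n$ and its slight open enlargement cause no issues with the (closed) endpoints $x_n,y_n\in\overline{\Omega_n}\setminus\Omega_n$.
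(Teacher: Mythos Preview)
Your proof is correct and follows essentially the same route as the paper's: both arguments obtain the upper bound directly from Proposition~\ref{isodiametric} and the lower bound by enclosing $\Omega_n$ in a rectangle with one side along the diametral segment and the other side equal to the total width $w_n$, then invoking monotonicity and formula~\eqref{eig.cube}. The only (minor) difference is that you make the step ``$w_n\to 0$'' quantitative via the inscribed quadrilateral bound $w_n\le 2|\Omega_n|/d_n$, whereas the paper asserts this qualitatively from convexity and $|\Omega_n|\to 0$.
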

\begin{proof}
Set $d_n := \text{diam}(\Omega_n)$. Without loss of generality, we can translate and rotate the sets $\Omega_n$ in such a way that the points $(0,0)$ and $(d_n,0)$ are on $\partial \Omega_n$. Let
\[ \varepsilon_n^{(1)} := \max \{ y \geq 0\,|\,(x,y) \in \partial \Omega_n\},\]
\[ \varepsilon_n^{(2)} := \min \{ y \leq 0\,|\,(x,y) \in \partial \Omega_n\}.\]
Since the sets $\Omega_n$ are convex, and $|\Omega_n|\to 0$ as $n \to +\infty$, it must hold $\varepsilon_n^{(1)}$, $\varepsilon_n^{(2)} \to 0$ as $n \to +\infty$. Moreover, $\Omega_n$ is contained in a rectangle $R_n$ of sides $d_n$ and $\varepsilon_n^{(1)} +\varepsilon_n^{(2)}$. By monotonicity,
\[ \mu_1(\Omega_n) \geq \mu_1(R_n) = \frac{\pi^2}{d_n^2 + (\varepsilon_n^{(1)} +\varepsilon_n^{(2)})^2} \Rightarrow \liminf_{n \to +\infty} \mu_1(\Omega_n) \geq \frac{\pi^2}{d^2}.\]
On the other hand, by Proposition \ref{isodiametric},
\[ \mu_1(\Omega_n) \leq \frac{\pi^2}{d_n^2} \Rightarrow \limsup_{n \to +\infty} \mu_1(\Omega_n) \leq \frac{\pi^2}{d^2}.\]
Hence, we have obtained that
\[ \lim_{n \to +\infty} \mu_1(\Omega_n) = \frac{\pi^2}{d^2},\]
as we wanted to show.
\end{proof}

Let us now discuss the minimization problem in $\R^2$
\[ \inf \Big\{\mu_1(\Omega)\,|\,\Omega \in \mathcal{K}_2,\,\text{diam}(\Omega)=d \Big\} =: m. \]
First, we observe that
\[ m \geq \frac{3}{4} \cdot \frac{\pi^2}{d^2} > 0\]
by \eqref{lowerboundjung}. Let $\{\Omega_n\}_{n \in \N}$ be a minimizing sequence. Since the functional $\Omega \mapsto \mu_1(\Omega)$ is translation invariant, we can suppose, without loss of generality, that all the sequence is contained in a fixed ball $B_r \subset \R^N$. By Blaschke's selection principle, there exists $\Omega \in \mathcal{K}_N$ and a subsequence (still denoted by $\{\Omega_n\}_{n \in \N}$) such that
\[ \Omega_n \stackrel{\mathcal{H}}{\to} \Omega\]
in the Hausdorff distance. If $|\Omega_n| \to 0$, by Lemma \ref{shrinking} it would hold 
\[ m = \frac{\pi^2}{d^2},\] which would contradict Proposition \ref{reuleaux} if we knew that the eigenfunction in the Reuleaux triangle is smooth. Therefore, we would have $|\Omega|>0$. Now, Proposition \ref{continuityhausdorff} implies that,
\[ \lim_{n \to +\infty} \mu_1(\Omega_n) = \mu_1(\Omega)=m.\]
Moreover, by continuity of the diameter with respect to the Hausdorff convergence of convex sets (see, for instance, \cite{antunesbogosel}), it holds
\[ \text{diam}(\Omega) =d\]
so that $\Omega$ would be a minimizer. In this case, by arguing as in \cite[Theorem 2.1]{bogoselhenrotlucardesi}, it is possible to show the existence of a minimizer which is a \emph{body of convex width}. We conjecture that the Reuleaux triangle is a minimizer, since it is generated from a regular polygon, and among all Reuleaux polygons it is the "farthest" from being a ball.

It is possible to perform similar reasonings for the minimization problem
\[\inf \Big\{\mu_1(\Omega)\,|\,\Omega \in \mathcal{K}_N,\,P(\Omega)=c \Big\}.\]
In this case, we conjecture that the infimum is not attained, and that a minimizing sequence is given by any sequence of rectangles as in Proposition \ref{shrinking}.

\end{document}